\documentclass[12pt]{article}
\usepackage[leqno]{amsmath}
\usepackage[margin=1.1in]{geometry}
\usepackage{tikz}
\usetikzlibrary{arrows.meta,calc}
\usepackage{amssymb,amsthm,mathtools}
\usepackage{enumitem}
\usepackage[hidelinks]{hyperref}
\usepackage{microtype}
\usepackage{comment}
\newtheorem{theorem}{Theorem}
\newtheorem{proposition}[theorem]{Proposition}
\newtheorem{lemma}[theorem]{Lemma}
\newtheorem{corollary}[theorem]{Corollary}
\newtheorem{question}[theorem]{Question}
\theoremstyle{remark}
\newtheorem{remark}[theorem]{Remark}

\newcommand{\R}{\mathbb R}

\newcommand{\Kn}{\mathcal{K}^n}

\newcommand{\pol}[1]{#1^{\circ}}
\newcommand{\conv}{\operatorname{conv}}

\newcommand{\ip}[2]{\left\langle #1,#2\right\rangle}

\newcommand{\intt}{\operatorname{int}}

\title{Self-dual sets up to a translation: a negative answer to Milman's question}
\author{Alex Segal}

\date{}

\begin{document}

\maketitle

\begin{abstract}
Let \(K,T\subset\mathbb R^n\) be convex bodies with the origin in their
interiors. If \(K=T^\circ\), then the identity
\(
K+T=K^\circ+T^\circ
\)
holds trivially. V.~Milman asked whether these trivial solutions are the only
ones, that is, whether
\(
K+T=K^\circ+T^\circ
\)
forces \(K=T^\circ\). We show that the answer is positive for polytopes but
negative for general convex bodies.
\end{abstract}

\section{Introduction}
Let $\Kn$ denote the family of compact convex bodies in $\R^n$ that contain the
origin in their interior. For $K \in \Kn$ the \emph{polar} (or \emph{dual}) body is
\begin{equation}\label{eq:polar}
  \pol{K} \;=\; \bigl\{\, y \in \R^n : \ip{x}{y} \le 1 \text{ for all } x \in K \,\bigr\},
\end{equation}
which again belongs to $\Kn$. Polarity is the fundamental order-reversing
involution of convex geometry: it satisfies the bipolar relation
$\pol{(\pol{K})} = K$, reverses order, and interchanges the support function
$h_K(u) = \sup_{x \in K} \ip{x}{u}$ with the Minkowski gauge functional $p_K$ through the
identity $h_{\pol{K}} = p_K$ (see \cite{schneider} for details). 
 
The structural rigidity of the polarity map has been the subject of extensive
study. A guiding principle is that polarity is essentially the
\emph{only} natural transformation of $\Kn$ with properties described above. Artstein-Avidan and Milman showed that an order-reversing involution on the class of closed convex sets containing the origin must be polarity up to a linear change of variables~\cite{AM-duality}.
Similar characterizations for other classes were obtained by B\"or\"oczky and
Schneider~\cite{BS-duality}. Another characterizing property of duality is that it interchanges intersection and convex hull. For results on the subject see e.g \cite{gruber, BS-duality, slomka}.

Similarly, it was shown that the Legendre transform is
 the unique order-reversing involution of convex
functions~\cite{AM-legendre}. These results show that
duality is characterized by maps with minimal properties. However, no such characterization seems to exist through solutions of equations involving convex sets.
 
Such an identity is the subject of the present note. Every convex body and its polar
satisfy $K + \pol{K} = \pol{K} + \pol{(\pol{K})}$, so each dual pair
$(K, \pol{K})$ solves
\begin{equation}\label{eq:main}
  K + T \;=\; \pol{K} + \pol{T}.
\end{equation}
Since both \eqref{eq:main} and the relation $K = \pol{T}$ are symmetric in $K$ and
$T$, it is natural to ask whether these dual pairs are the only solutions. This
question was raised by V.~Milman. We prove that the answer is positive in the class of polytopes, but negative for general convex bodies.
The existence of bodies that are self-dual up to an affine transformation is of independent interest. For instance, Jensen \cite{jensen} recently investigated the structure of self-dual polytopes  up to an orthogonal transformation. In a similar spirit, our counterexample is formed by constructing two distinct convex bodies that are self-dual up to a common translation. Namely, we prove the following:

\begin{theorem} \label{thm-main-polyopes}
Let $n \geq 2$. Assume that $K, T \in \Kn$ are polytopes, such that
\[
K + T = K^\circ + T^\circ.
\]
Then, $K=T^\circ$.
\end{theorem}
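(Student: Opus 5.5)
The plan is to assume $K\neq\pol{T}$ and play two ingredients against each other: a monotonicity/extremal argument that works for all convex bodies, and the piecewise-linear structure of support functions of polytopes. The identity \eqref{eq:main} says, in terms of support functions, that
\[
h_K+h_T \;=\; h_{\pol K}+h_{\pol T},\qquad\text{equivalently}\qquad g:=h_K-h_{\pol T}=h_{\pol K}-h_T .
\]

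\textbf{Step 1: reduction to the ``crossing'' case.} If $K\subseteq \pol T$, then polarity reverses order and gives $T\subseteq \pol K$. Hence $h_K\le h_{\pol T}$ and $h_T\le h_{\pol K}$. The identity forces equality in both, so $K=\pol T$. The same argument applies if $\pol T\subseteq K$. So, assuming $K\neq\pol T$, neither of $K,\pol T$ contains the other, and $g$ takes both signs on the sphere. This step holds for arbitrary bodies, so polytopality must enter later.

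\textbf{Step 2: extremal bilinear quantities.} Put
\[
t=\max_{x\in K,\,y\in T}\ip{x}{y}=\max_{x\in K}p_{\pol T}(x),\qquad s=\max_{x\in\pol K,\,y\in\pol T}\ip{x}{y}.
\]
By Step 1, $t>1$. By Step 1 applied to the pair $(\pol K,\pol T)$, also $s>1$. For polytopes, $t$ is attained at a pair of vertices $x_0\in K$, $y_0\in T$. There $y_0$ is an outer normal of $K$ at $x_0$, and $x_0$ is an outer normal of $T$ at $y_0$. Similarly, $s$ is attained at vertices $\xi_0$ of $\pol K$ and $\eta_0$ of $\pol T$, i.e. at facet normals of $K$ and $T$.

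I will evaluate the identity at $y_0$ and at $x_0$. At $y_0$ we have $h_K(y_0)=t$ and $h_{\pol T}(y_0)=p_T(y_0)=1$. At $x_0$ we have $h_T(x_0)=t$ and $h_{\pol K}(x_0)=1$. This gives
\[
t+h_T(y_0)=p_K(y_0)+1,\qquad h_K(x_0)+t=1+p_T(x_0),
\]
and analogous relations at $\xi_0,\eta_0$ with $s$. Combining these with the trivial bounds $p_K(y_0)\le t\,\cdot$ and $h_T(y_0)\ge |y_0|^2/|y_0|\cdot$ (Cauchy--Schwarz-type bounds coming from the definitions of $t$ and $s$) should produce inequalities of the form $t\le s$ and $s\le t$, with strictness somewhere unless $t=s=1$.

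\textbf{Step 3: the polytope input, which I expect to be the main obstacle.} The inequalities from Step 2 alone cannot close the argument, because the paper states that the result fails for general bodies. The extra information I will use is the normal-fan structure. The normal fan of $K+T$ is the common refinement of $\mathcal N(K)$ and $\mathcal N(T)$, and also of $\mathcal N(\pol K)$ and $\mathcal N(\pol T)$. The rays of $\mathcal N(\pol K)$ are the vertex directions of $K$, and its maximal cones are the cones over facets of $K$. Hence the set of facet normal directions of $K+T$ equals
\[
\{\text{facet normals of }K\}\cup\{\text{facet normals of }T\}=\{\text{vertex directions of }K\}\cup\{\text{vertex directions of }T\}.
\]
Moreover, $g$ is linear on each cone of the common refinement of all four fans.

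I would first try to use this to upgrade the touching at $x_0$ to a genuine contradiction. The direction $y_0$, being a vertex direction of $T$, must be a facet normal of $K$ or of $T$. Following this through one cone at a time, I would show that the maximizing pair can be propagated along the fan until it yields $\max g\le 0$, contradicting Step 1. If this direct propagation fails, the fallback is to compare the combinatorics directly. Counting facets and vertices through the displayed identity of direction sets, together with the linearity of $g$ on the cones, should force $\mathcal N(K)=\mathcal N(\pol T)$ and then $g\equiv0$.

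The delicate point is exactly this passage from the extremal data of Step 2 to a statement about all directions. It is where finitely many vertices and facets, and hence the piecewise linearity of $g$, must be used essentially.
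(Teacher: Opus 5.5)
Your Step~1 is correct, and it holds for arbitrary bodies. Beyond it, though, the proposal contains no proof.

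In Step~2 the bounds are left unfinished. More importantly, the hoped-for conclusion ``$t\le s$ and $s\le t$, strictly unless $t=s=1$'' cannot come out of it. The maximizing pair, the evaluations at $x_0,y_0$, and Cauchy--Schwarz-type bounds all make sense for arbitrary convex bodies, since the maxima are attained at extreme points. So such a conclusion would contradict the counterexample $K=K_1$, $T=\pol{K_2}$ from Theorem~\ref{thm-main1}, as you concede at the start of Step~3.

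Step~3 is a plan (``I would show'', ``should force'') rather than an argument. Its displayed identity of direction sets is also false for $n\ge 3$. A facet of $K+T$ can be the sum of two non-parallel edges of $K$ and $T$, so the common refinement of two normal fans has rays belonging to neither fan. Finally, $\mathcal N(K)=\mathcal N(\pol T)$ alone would not give $g\equiv 0$, since normally equivalent polytopes (e.g.\ $K$ and $2K$) can differ.

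The missing idea is a strictly monotone quantity along your ``propagation''. In your notation, $g=h_K-h_{\pol T}=h_{\pol K}-h_T=1/\rho_K-1/\rho_{\pol T}$. Equivalently $F_K=F_{\pol T}$, where $F_L:=h_L-1/\rho_L$ on $S^{n-1}$. The paper uses two facts.
\begin{enumerate}
\item[(i)] Ascent: if $x$ is a support point of a body $L$ with outer normal $u$, and $v=x/\|x\|\neq u$, then $F_L(v)>F_L(u)$. This follows from $h_L(v)\ge\rho_L(v)>h_L(u)\ge\rho_L(u)$ together with the monotonicity of $r\mapsto r-1/r$.
\item[(ii)] Sign flip: if $g(u)>0$, every support point $x$ of $K$ with normal $u$ lies outside $\pol T$. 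Hence $g(v)=1/\rho_K(v)-1/\rho_{\pol T}(v)<0$. Symmetrically, if $g(u)<0$, one uses a support point of $\pol T$.
\end{enumerate}
Start from any $u_0$ with $g(u_0)\neq 0$, and at each step pick a vertex of $K$ or of $\pol T$ in the relevant exposed face. This produces vertex directions $u_1,u_2,\dots$ with $F_K(u_1)<F_K(u_2)<\cdots$, which is impossible in the finite set of vertex directions.

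Your propagation has the right shape. Without such a monovariant, however, nothing prevents it from cycling among finitely many directions. This is exactly where finiteness, i.e.\ polytopality, enters the proof.
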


\begin{theorem} \label{thm-main1}
Let $n \geq 2$ and let $0\neq s \in \R^n$. Then, there exist two different convex bodies $K_1, K_2 \in \Kn$ such that 
\[
K_i^\circ = K_i - s, \qquad i=1,2.
\]
\end{theorem}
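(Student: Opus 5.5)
The plan is to obtain both bodies as \emph{maximal} elements of a suitable family, via Zorn's lemma, and to make them different by starting from two incompatible seed sets. The condition $K^\circ=K-s$ says: $y\in K$ if and only if $\ip{x}{y-s}\le 1$ for all $x\in K$. So call a set $A\subset\R^n$ \emph{$s$-admissible} if
\[
\ip{x}{y-s}\le 1\qquad\text{for all } x,y\in A,
\]
with both orders of the pair $(x,y)$ and also $x=y$ allowed. This is exactly the inclusion $A-s\subset A^\circ$.

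Admissibility is preserved by closed convex hulls, since the left side is affine in each variable separately. It is also preserved by unions of chains. Zorn's lemma therefore gives, for every admissible seed $A_0$, a maximal admissible $K\supset A_0$, and $K$ is automatically closed and convex. We still need $K\in\Kn$ and the reverse inclusion $K^\circ\subset K-s$.

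\textbf{Seeds and distinctness.} Consider the sphere
\[
S=\bigl\{a:\ \ip{a}{a-s}=1\bigr\},
\]
which has center $s/2$ and radius $\sqrt{1+|s|^2/4}$. For $a,b\in S$ we have
\[
\ip{a}{b-s}=1+\ip{a}{b-a}.
\]
If $a$ is not on the line $\R s$, then $a$ is not parallel to the normal $a-s/2$ of $S$ at $a$. Hence we can pick $b\in S$ near $a$ with $\ip{a}{b-a}>0$, that is, $\ip{a}{b-s}>1$.

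The seeds are then $A_1=\{a\}\cup B_\varepsilon$ and $A_2=\{b\}\cup B_\varepsilon$, where $B_\varepsilon$ is a small ball about the origin. We must check that each seed is admissible; this is a direct estimate once $\varepsilon$ is small, provided $\ip{a}{s}$ and $\ip{b}{s}$ are not too negative. If needed, we adjust $a$ to satisfy this. Any maximal body $K_1\supset A_1$ and $K_2\supset A_2$ must then differ, because $a,b$ cannot lie in a common admissible set. Both bodies contain $B_\varepsilon$, so both lie in $\Kn$.

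\textbf{Maximality implies self-duality up to $s$.} This is the main obstacle. Suppose $K$ is maximal and $z\in K^\circ$ but $w:=z+s\notin K$. We would like to add a point to $K$ and contradict maximality. In the classical self-polar case ($s=0$) one uses the symmetric bilinear form: if $\ip{z}{z}>1$, replace $z$ by a convex combination with a point of $K$.

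Here the form $F(x,y)=\ip{x}{y-s}$ is not symmetric, so we must check three conditions for the point $w$ (or a modification of it):
\begin{itemize}[label={}, leftmargin=1em]
\item (i) $F(x,w)\le1$ for $x\in K$, which holds because $z\in K^\circ$;
\item (ii) $F(w,y)\le1$ for $y\in K$;
\item (iii) $F(w,w)\le1$.
\end{itemize}

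To handle this, first symmetrize by translating: with $u=x-s/2$ and $v=y-s/2$,
\[
F(x,y)=\ip{u}{v}-\tfrac{|s|^2}{4}+\tfrac12\ip{s}{v-u}.
\]
So $F(x,y)+F(y,x)$ is a symmetric form in $(u,v)$, and the antisymmetric part is the linear term $\tfrac12\ip{s}{v-u}$.

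The attempt is as follows. Take the candidate $w_t=(1-t)x_0+tw$ with $x_0\in K$ chosen to maximize $\ip{s}{\cdot}$, or more generally to maximize a suitable functional. Then $F(x,w_t)\le1$ for all $x\in K$ by convexity, using $z\in K^\circ$. The quantities $F(w_t,y)$ and $F(w_t,w_t)$ are, respectively, affine and quadratic in $t$, and equal at most $1$ at $t=0$.

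One shows that for small $t>0$ they stay at most $1$ whenever $w\notin K$, using the maximality of $K$ at the extremal point $x_0$. That would contradict maximality. If this direct route fails, the fallback is to restrict to $n=2$, and then to bodies of revolution about $\R s$ in general dimension. There the condition $K^\circ=K-s$ becomes an explicit equation for the boundary curve: a Legendre-type functional equation for $h_K$, obtained from $h_{K^\circ}=p_K$ and $h_{K-s}=h_K-\ip{s}{\cdot}$. We would then solve it by prescribing the curve freely on an arc and extending by the duality map, which produces two different solutions.
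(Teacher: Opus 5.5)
\textbf{The gap.} The gap is the step you flag yourself, ``maximality implies $K^\circ=K-s$'', and it cannot be filled, because it is false. A maximal admissible body need not be a solution. Here is a counterexample in the paper's planar notation ($s=(m,0)$, $p(t)$, $q(t)=p(t)-s$, $\lambda$).

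\emph{Setup.} Let $D$ be the disk bounded by $\Gamma_s$. Put $L=\conv\{p(t):t\in\Lambda_1\}$ and $L'=\conv\{p(t):t\in\Lambda_1\setminus\{1,\lambda\}\}$. Let $\Sigma$ be the closed circular segment of $D$ cut off by the chord $[p(1),p(\lambda)]$, and pick $w\in\Sigma\cap\intt D$ with $w\notin L$.

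\emph{Admissibility and confinement.} By \eqref{eq:q_t_p_u}, for $u>0$:
\begin{itemize}[label={}, leftmargin=1em]
\item the half-plane $\{y:\ip{p(u)}{y-s}\le 1\}$ removes from $D$ exactly the open segment beyond the chord $[p(u/\lambda),p(u)]$;
\item the half-plane $\{y:\ip{y}{q(u)}\le 1\}$ removes the open segment beyond $[p(u),p(\lambda u)]$.
\end{itemize}
The case $u<0$ is analogous. Using only vertices of $L'$, the segment beyond every edge of $L$ except $[p(1),p(\lambda)]$ is removed. Hence $A_0=\conv(L'\cup\{w\})$ is admissible, and every admissible $M\supseteq A_0$ lies in $L\cup\Sigma$.

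\emph{No such $M$ is a solution.} Suppose such an $M$ had $M^\circ=M-s$.
\begin{itemize}[label={}, leftmargin=1em]
\item Near $p(\lambda^{-1})$ we have $M\subset L$, so $p(\lambda^{-1})$ is a corner of $M$.
\item Hence $q(\lambda^{-1})$ is a corner of $M-s=M^\circ$. Dually, $M$ has a nondegenerate edge $E$ on the line $\ell=\{y:\ip{y}{q(\lambda^{-1})}=1\}$, which is the line through $p(\lambda^{-1})$ and $p(1)$.
\item Then $E-s$ is an edge of $M^\circ$, so some $x\in M$ satisfies $\{y:\ip{x}{y-s}=1\}=\ell$.
\item Since $\ell=\{y:\ip{p(1)}{y-s}=1\}$ and $x\mapsto\{y:\ip{x}{y-s}=1\}$ is injective, $x=p(1)\in M$.
\item But $\ip{w}{p(1)-s}=\ip{w}{q(1)}>1$, contradicting admissibility.
\end{itemize}
So a Zorn-maximal $M\supseteq A_0$ exists and is not a solution. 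Since $M$ contains $p(\lambda^{-1})$ and a ball around $0$, it even contains a seed of your type, so the choice of seeds does not help.

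Your local attempt also fails concretely. The point $w_t$ may lie in $K$ for small $t$. And when a constraint in (ii) or (iii) is active at $t=0$, you would need $F(w,y)\le 1$ on the active set, which $z\in K^\circ$ does not provide.

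\textbf{What survives, and what is missing.} The correct parts of your proposal are:
\begin{itemize}[label={}, leftmargin=1em]
\item closure of admissibility under closed convex hulls and chains;
\item $K$ lying inside $\Gamma_s$;
\item incompatibility of nearby off-axis points of $S$;
\item admissibility of the seeds (in fact $\ip{a}{s}=|a|^2-1>-1$ automatically).
\end{itemize}
These yield two distinct maximal admissible bodies, but not solutions.

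Your fallback is where the whole content of the theorem lies, and it is only named. You would have to produce the extension explicitly and verify $K^\circ=K-s$ globally, including convexity and the closing up at the two axis points. The paper does precisely this:
\begin{itemize}[label={}, leftmargin=1em]
\item the duality acts on $\Gamma_s$ as $t\mapsto\lambda t$;
\item the orbit closure $K_\alpha$ has vertices $p(\pm\alpha\lambda^n)$ accumulating at the fixed points $p(0),p(\infty)$;
\item the half-plane representation $K_\alpha=\bigcap_t\{x:\ip{q(t)}{x}\le1\}$ gives $K_\alpha^\circ=K_\alpha-s$;
\item $\alpha=1$ and $\alpha=\sqrt\lambda$ are separated by $p(1)$;
\item revolution handles $n\ge3$.
\end{itemize}
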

Theorem \ref{thm-main1} provides an immediate counterexample to the conjecture by taking $K = K_1, \, T=K_2^\circ$:
\[
K^\circ + T^\circ = K_1 - s + K_2 = K_1 + K_2^\circ = K + T.
\]
To prove Theorem \ref{thm-main1}, we will build it in the two-dimensional plane first, by creating a suitable infinite polygon. To generalize it to higher dimension, we will consider its body of revolution. \\
In addition, we discuss a question closely related to (\ref{eq:main}), raised by V. Milman as well: 
\begin{question}
    Assume $K,T \in \Kn$, such that $K + K^\circ = T + T^\circ$. Does it follow that $K=T$ or $K=T^\circ$?
\end{question}
We show that the answer is negative as well.
\begin{theorem} \label{thm-main2}
Let $n \geq 2$. Then, there exist two different convex bodies $K, T \in \Kn$ such that 
\[
K + K^\circ = T + T^\circ,
\]
and $K\neq T^\circ, \,\, K \neq T$.
\end{theorem}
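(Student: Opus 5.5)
We will derive Theorem \ref{thm-main2} from Theorem \ref{thm-main1} by a small modification of the argument that gave the counterexample to \eqref{eq:main}. Fix $0\neq s\in\R^n$ and let $K_1,K_2\in\Kn$ be the two different bodies given by Theorem \ref{thm-main1}, so that $K_i^\circ=K_i-s$ for $i=1,2$. Then
\[
K_i+K_i^\circ = 2K_i - s ,\qquad i=1,2 .
\]
The quantity $K_i+K_i^\circ$ is therefore determined by $K_i$ alone and does not mix the two bodies. The direct choice $K=K_1$, $T=K_2$ gives $2K_1-s$ and $2K_2-s$, which differ. So we need either a construction in which one family of self-dual-up-to-translation bodies shares a common Minkowski sum, or a different pairing.

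The plan is to use the freedom in Theorem \ref{thm-main1} more cleverly. Apply the theorem with two \emph{different} translation vectors. If $K_1^\circ=K_1-s$ and $K_2^\circ=K_2-t$, then $K_1+K_1^\circ=2K_1-s$ and $K_2+K_2^\circ=2K_2-t$. These coincide exactly when
\[
K_2 = K_1 + \tfrac{t-s}{2}.
\]
So it suffices to find a body $L$ and a vector $a\neq 0$ such that both $L$ and $L+a$ are self-dual up to translation and both contain the origin in their interior. Then $K=L$ and $T=L+a$ give $K+K^\circ=T+T^\circ$ with $K\neq T$.

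The remaining steps are the following.
\begin{enumerate}
\item Verify that the two-dimensional infinite-polygon construction behind Theorem \ref{thm-main1}, and its body of revolution, can be arranged so that the same body, translated along the axis of symmetry $s$, is again self-dual up to a (different) translation. Concretely, we need $(L+a)^\circ = L + a - t$ for some $t$.
\item Alternatively, if that fails, use the fact that the construction in Theorem \ref{thm-main1} has free parameters. Choose two bodies $K_1, K_2$ with the same vector $s$, and then exploit a symmetric family: take $K=K_1$ and $T=-K_1+s$. Note that $(-K_1+s)^\circ$ is computable from $K_1^\circ=K_1-s$ only when $s$ interacts well with the reflection, so this needs checking.
\item Check that $K\neq T^\circ$. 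In the translated-family approach, $T^\circ = T - t = L+a-t$, and $K=L$. So we need $a\neq t$, which is an open condition that can be ensured by perturbing parameters.
\end{enumerate}

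The main obstacle is step 1: producing a body that is self-dual up to translation at two different positions. Polarity depends heavily on the location of the origin, so this is a genuine additional constraint. What I would try first is to revisit the one-parameter freedom in the planar polygon construction, namely the starting vertex and the step sequence, and to show that the construction is determined by a polar-compatibility recursion. I would then try to impose the recursion for two centers simultaneously, or argue by a continuity/degree argument in the parameters that some body admits two such centers. If this does not work, I would fall back on a direct planar construction of a curve whose polar with respect to two points are both translates, and then revolve it as in Theorem \ref{thm-main1}.
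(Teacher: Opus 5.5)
\textbf{There is a genuine gap.} Your proposal does not yet prove the statement. No example is produced, and the step that would produce one (Step~1) is left as an open search.

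\emph{The reduction is not sufficient as stated.} It drops the constraint you had just derived. If $L^\circ=L-s$ and $(L+a)^\circ=L+a-t$, then
\[
L+L^\circ=2L-s,\qquad (L+a)+(L+a)^\circ=2L+2a-t .
\]
So you need exactly $t=s+2a$, i.e.\ $(L+a)^\circ=L^\circ-a$. Knowing that $L$ and $L+a$ are each ``self-dual up to some translation'' does not suffice.

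\emph{Step~2 cannot work.} Since $K_1-s=K_1^\circ$, the body $T=-K_1+s$ satisfies
\[
T^\circ=-(K_1-s)^\circ=-K_1=T-s ,
\]
so $T$ is self-dual up to the \emph{same} vector $s$. By your own opening observation, $2T-s=2K_1-s$ then forces $T=K_1$. For the paper's $K_1$ this is exactly what happens: it is centrally symmetric about $s/2$, so $-K_1+s=K_1$.

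\emph{Step~1 is unresolved and searches in the wrong class.} Self-duality up to translation is irrelevant here, and your search space in fact excludes the simple solution. All you need is a body $L$ and $a\neq0$ with $(L+a)^\circ=L^\circ-a$ and $L+a\neq L^\circ$. Then $T=L+a$ gives $T+T^\circ=L+L^\circ$.

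The paper obtains such a pair with no infinite polygons, from an off-center Euclidean ball
\[
K=\{x:\langle x,x-a\rangle\le1\},\qquad a\neq 0 .
\]
Its gauge is $p_K(x)=\tfrac12\bigl(-\langle a,x\rangle+\sqrt{\langle a,x\rangle^2+4|x|^2}\bigr)$. From this,
\[
p_K(x)+\langle a,x\rangle\le1\iff\langle x,x+a\rangle\le1 ,
\]
which says $(K^\circ+a)^\circ=K-a$. Hence $T=K^\circ+a$ satisfies $T^\circ=K-a$ and $T+T^\circ=K+K^\circ$. Moreover $T\neq K^\circ$ because $a\neq0$, and $T\neq K$ because $K^\circ$ is a non-spherical ellipsoid while $K$ is a ball.

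In your notation this is $L=K^\circ$, with $(L+a)^\circ=L^\circ-a$. This $L$ is an ellipsoid whose polar is a ball, so it is not self-dual up to any translation, and Step~1 as you formulated it would never find it.
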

The latter is constructed, using sets related to the proof of Theorem \ref{thm-main1}. 

\section{Polytopes}
Notice that the main conjecture is equivalent to showing that if $K + T^\circ = K^\circ + T$ then $K=T$. In terms of support functions, the equation is equivalent to 
\[
h_K + h_{T^\circ} = h_T + h_{K^\circ},
\]
or 
\begin{equation} \label{eq:support_radial_form}
h_K + \frac{1}{\rho_T} = h_T + \frac{1}{\rho_K},
\end{equation}
on the unit sphere $S^{n-1}$, where $\rho_K$ is the radial function of $K$.
Given a convex set $K \in \Kn$, define $F_K:S^{n-1} \to \R$ by
\[
F_K = h_K - \frac{1}{\rho_K}.
\]
\begin{remark}
Given $K\in \Kn$ and $u\in S^{n-1}$ we will say that $x\in \partial K$ is a support point of $K$ with normal $u$ if there exists a supporting hyperplane of $K$ with normal $u$, containing $x$. Equivalently, $h_K(u) = \ip{x}{u}.$
\end{remark} 
Hence, to prove Theorem \ref{thm-main-polyopes}, we must show that $F_K$ is injective on the class of polytopes. To this end, we show the following claims.

\begin{lemma}\label{lem:ascent}
Let $K\in \Kn$ and let $x\in \partial K$ be a support point of $K$ with outer normal $u\in S^{n-1}$.
Then
\[
        F_K(v)\ge F_K(u),
\]
for \[
        v = \frac{x}{\|x\|}.
\]
In addition, if \(v\ne u\), then the inequality is strict.
\end{lemma}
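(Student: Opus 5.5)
Write $x=\|x\|v$ with $v=x/\|x\|$; here $x\neq 0$ because $0\in\intt K$ and $x\in\partial K$. The plan is to estimate each of the two terms of $F_K$ at $u$ and at $v$ separately, using only elementary facts, and then add the estimates.

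First, $x\in\partial K$ lies on the ray in direction $v$, so $\rho_K(v)=\|x\|$. Hence
\[
\frac{1}{\rho_K(v)}=\frac{1}{\|x\|}.
\]
Since $x$ is a support point with normal $u$, we have $h_K(u)=\ip{x}{u}=\|x\|\ip{v}{u}$, and this quantity is positive because $0\in\intt K$.

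Second, bound $h_K(v)$ from below. Since $x\in K$,
\[
h_K(v)\ge \ip{x}{v}=\|x\|.
\]

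Third, bound $\rho_K(u)$ from below. The point $\rho_K(u)u$ lies in $K$, so $\rho_K(u)=\rho_K(u)\ip{u}{u}\le h_K(u)$. This gives
\[
\frac{1}{\rho_K(u)}\ge \frac{1}{h_K(u)}=\frac{1}{\|x\|\ip{u}{v}}.
\]

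Combining the three estimates, with $t=\ip{u}{v}\in(0,1]$ and $r=\|x\|$,
\[
F_K(v)-F_K(u)\ \ge\ r-\frac1r - rt+\frac{1}{rt} \;=\; r(1-t)+\frac{1-t}{rt}\;=\;(1-t)\Bigl(r+\frac{1}{rt}\Bigr)\ \ge\ 0.
\]

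For strictness, note that if $v\neq u$ then $t<1$, because both are unit vectors. The factor $r+1/(rt)$ is strictly positive, so the difference is strictly positive.

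The main obstacle is the sign of $\ip{u}{v}$, which is needed for the reciprocal bound in the third step and for the factorization. It is handled by observing that $h_K(u)>0$ whenever $0\in\intt K$. Beyond that, one only has to check the identification $\rho_K(v)=\|x\|$, which holds because $K$ is star-shaped with respect to the interior point $0$, so each ray from the origin meets $\partial K$ exactly once.
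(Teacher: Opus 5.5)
Your proof is correct and follows essentially the same route as the paper. It uses the same three estimates ($\rho_K(v)=\|x\|$, $h_K(v)\ge\rho_K(v)$, and $\rho_K(u)\le h_K(u)$, with $h_K(u)>0$ forcing $0<\langle u,v\rangle\le 1$), the same factorization of the lower bound, and the same strictness argument via $\langle u,v\rangle<1$ when $u\neq v$.
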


\begin{proof}
Denote $c=\ip{u}{v}$.
By definition of \(x\), we have
\[
        h_K(u)=\ip{x}{u}=\rho_K(v)\cdot \ip{v}{u}=c\rho_K(v).
\]
The convex set $K$ contains $0$ in its interior, so $h_K(u) > 0$ which yields $0 < c \leq 1$ and $\rho_K(v) \geq h_K(u)$. Considering the above, we conclude
\begin{align*}
        F_K(v)-F_K(u)
        &=\left(h_K(v)-\frac1{\rho_K(v)}\right)
          -\left(h_K(u)-\frac1{\rho_K(u)}\right) \\
        &\ge \left(\rho_K(v)-\frac1{\rho_K(v)}\right)-\left(h_K(u)-\frac1{h_K(u)}\right) \\
        &= \left(\rho_K(v)-h_K(u)\right)+\left(\frac{\rho_K(v)-h_K(u)}{\rho_K(v)h_K(u)}\right) \geq 0.
\end{align*}
Notice that the inequality is strict, unless $\rho_K(v)=h_K(u)$, which is equivalent to $u=v$. This completes the proof.
\end{proof}

\begin{lemma} \label{lem:sign-flip}
    Let $K,T \in \Kn$ such that $F_K = F_T$. Let $u \in S^{n-1}$ such that 
    \[
    \Delta(u):= h_K(u) - h_T(u) >  0.
    \]
    Then, for every support point $x\in \partial K$ with outer normal $u$, the direction 
    \[
    v = \frac{x}{\|x\|}
    \]
    satisfies $\Delta(v) < 0$ and $F_K(v) > F_K(u)$.
    Similarly, if $\Delta(u) < 0$ then, for every support point $x\in \partial T$ with normal $u$, the direction 
    \[
    v = \frac{x}{\|x\|}
    \]
    satisfies $\Delta(v) > 0$ and $F_T(v) > F_T(u).$
\end{lemma}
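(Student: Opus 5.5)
The plan is to rewrite the hypothesis $F_K=F_T$ as an identity for $\Delta$, deduce $\Delta(v)<0$ from a comparison of radial functions along the ray through $x$, and then get the strict ascent from Lemma~\ref{lem:ascent}. Since $F_K=F_T$ means $h_K-\frac{1}{\rho_K}=h_T-\frac{1}{\rho_T}$ on $S^{n-1}$, we get
\[
\Delta = h_K-h_T = \frac{1}{\rho_K}-\frac{1}{\rho_T}
\]
on $S^{n-1}$. Hence the sign of $\Delta$ at any direction can be read off from radial functions: $\Delta(v)<0$ if and only if $\rho_K(v)>\rho_T(v)$.

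Now fix $u$ with $\Delta(u)>0$, a support point $x\in\partial K$ with outer normal $u$, and $v=x/\|x\|$. Set $c=\ip{u}{v}$. As in the proof of Lemma~\ref{lem:ascent}, we have $x=\rho_K(v)v$, so $h_K(u)=c\,\rho_K(v)$ and $0<c\le 1$.

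On the other hand, the point $\rho_T(v)v$ lies in $T$. Therefore $c\,\rho_T(v)=\ip{\rho_T(v)v}{u}\le h_T(u)$. Combining this with $\Delta(u)>0$ gives
\[
c\,\rho_T(v)\le h_T(u)<h_K(u)=c\,\rho_K(v).
\]
Dividing by $c>0$ yields $\rho_T(v)<\rho_K(v)$, so $\Delta(v)<0$ by the first paragraph.

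For the inequality $F_K(v)>F_K(u)$, we need $v\neq u$ in order to invoke the strict part of Lemma~\ref{lem:ascent}. This is immediate: if $v=u$ then $\Delta(v)=\Delta(u)>0$, contradicting $\Delta(v)<0$. Lemma~\ref{lem:ascent} then gives $F_K(v)>F_K(u)$.

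The second statement follows by exchanging the roles of $K$ and $T$, which replaces $\Delta$ by $-\Delta$; the hypothesis $F_K=F_T$ is symmetric. The only step needing care is the comparison of $\rho_T(v)$ with $h_T(u)$ through the point $\rho_T(v)v\in T$. This step is routine, so I expect no real obstacle.
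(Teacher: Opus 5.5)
Your proposal is correct and follows essentially the same route as the paper. The paper first shows $x\notin T$ from the support inequality $\ip{y}{u}\le h_T(u)<h_K(u)$ and then concludes $\rho_T(v)<\rho_K(v)$; your comparison through the point $\rho_T(v)v\in T$ is the same step in direct form. As in the paper, strictness then comes from $v\neq u$ together with Lemma~\ref{lem:ascent}, and the second case follows by symmetry.
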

\begin{proof}
Assume that \(\Delta(u)>0\).  Then
\[
        h_K(u)>h_T(u).
\]
Let \(x\in \partial K\) be a support point of \(K\) with outer normal \(u\), so that
\[
        \ip{x}{u}=h_K(u).
\]
Notice that $x\not\in T$, since every point \(y\in T\) satisfies
\[
        \ip{y}{u}\le h_T(u)<h_K(u)=\ip{x}{u}.
\]
Since \(x\in\partial K\) and \(0\in\intt K\), 
\[
        x=\rho_K(v)v.
\]
We know that \(x\notin T\), so $\rho_T(v)<\rho_K(v)$.
Consequently
\[
        \Delta(v)=\frac1{\rho_K(v)}-\frac1{\rho_T(v)}<0.
\]
This proves the first assertion. Now, by Lemma \ref{lem:ascent} We know that $F_K(v) \geq F_K(u)$. Clearly, $u\neq v$, since otherwise we would have $\Delta(u)=\Delta(v) > 0$. Hence $u\neq v$ and $F_K(v) > F_K(u).$ The case of $\Delta(u) < 0$ is similar.
\end{proof}

Now we turn to the proof of Theorem \ref{thm-main-polyopes}. 
\subsection{Proof of Theorem \ref{thm-main-polyopes}}
Let $K,T \in \Kn$ be two polytopes such that $K + T^\circ = K^\circ + T.$ Define \(\mathcal V_K\subset S^{n-1}\) to be the finite set of radial directions of
vertices of \(K\):
\[
        \mathcal V_K=
        \left\{\frac{x}{\|x\|}:x\text{ is a vertex of }K\right\}.
\]
Analogously, define \(\mathcal V_T\) to be the radial directions of vertices of $T$, and the union
\[
        \mathcal V=\mathcal V_K\cup\mathcal V_T,
\]
which is clearly a finite subset of \(S^{n-1}\).

Assume that \(\Delta:=h_K - h_T \not\equiv0\) and choose \(u_0\in S^{n-1}\) with \(\Delta(u_0)\ne0\). If $\Delta(u_0) > 0$ consider the exposed face
\[
        E_K(u_0):=\{x\in K:\ip{x}{u_0}=h_K(u_0)\}
\]

 Since \(K\) is a polytope, this face contains at least one vertex.  Choose a
vertex \(x_1\in E_K(u_0).\) 
Similarly, If \(\Delta(u_0)<0\), choose a vertex \(x_1\) of \(T\) in the exposed face
 \[
 E_T(u_0)=\{x\in T:\langle x,u_0\rangle=h_T(u_0)\}.
 \]
Define
\[
        u_1=\frac{x_1}{\|x_1\|}\in\mathcal V.
\]
By Lemma~\ref{lem:sign-flip}, and using \(F_K=F_T\) in the case
  \(\Delta(u_0)<0\) we get
\[
        \Delta(u_1) \neq 0,
        \qquad
        F_K(u_1)>F_K(u_0).
\]

Now repeat the process.  Suppose \(u_j\in\mathcal V\) has already been
constructed and \(\Delta(u_j)\ne0\).  If \(\Delta(u_j)>0\), choose a vertex of
\(K\) in the exposed face with normal \(u_j\).  If \(\Delta(u_j)<0\), choose a
vertex of \(T\) in the exposed face with normal \(u_j\).  Let \(u_{j+1}\) be
the radial direction of the chosen vertex.  Lemma~\ref{lem:sign-flip} gives
\[
        \Delta(u_{j+1}) \neq 0, \qquad F_K(u_{j+1})>F_K(u_j).
\]
Notice that in the last inequality we've used the fact that $F_K = F_T$.
Thus we obtain an infinite sequence
\[
        u_1,u_2,u_3,\dots\in\mathcal V
\]
such that
\[
        F_K(u_1)<F_K(u_2)<F_K(u_3)<\cdots.
\]
But \(\mathcal V\) is finite, so there exist $k \neq m$ such that $u_k = u_m$, which is a contradiction to the monotonicity of $\{F(u_j)\}$. 
Hence, we conclude that $\Delta \equiv 0$ and $K=T$.
\qed

\section{Counterexample for the general setting}
Now we turn to the proof of Theorem \ref{thm-main1}. We first construct a two dimensional example, and then we show how it generalizes to $\R^n$.

\subsection{Two dimensional construction}
Ideally, we would like to build a convex polygon $K$ such that its polar is $K-s$ for some shift $s$.  
Consider a vertex $p \in K$. In our desired construction, $p-s$ must be a vertex in $K^\circ$, so $\langle p-s, x\rangle = 1$ will be the supporting line of $K$.
Thus, it is natural to require that $\langle p-s, p\rangle = 1$.
The latter implies that each vertex of $K$ must lie on the circle 
\begin{equation}\label{eq:Gamma-s}
        \Gamma_s
        :=\left\{p\in\R^2:\ip{p-s}{p}=1\right\}
        =\left\{p:\left|p-\frac{s}{2}\right|^2=1+\frac{|s|^2}{4}\right\}.
\end{equation}

Without loss of generality, assume that $s = (m, 0)$ for some $m > 0$ and denote the radius of $\Gamma_s$ by 
\[
R = \sqrt{1+\frac{m^2}{4}}.
\]
Since all vertices in our construction lie on a circle, we may parametrize our problem. We consider the rational parametrization of $\Gamma_s$:
\[
p(t) = \left(\frac{m}{2} + R\frac{1-t^2}{1+t^2}, R\frac{2t}{1+t^2}\right), \qquad t \in \mathbb{R} \cup \{\infty\}.
\]
If $p(t)$ is a vertex in $K$, then $q(t)=p(t)-s$ is a vertex in $K^\circ$. We would like to determine the consecutive vertex of $p(u)$ of $K$ such that $q(t)$ is the vertex that defines the supporting line of $K$ on the edge $[p(t), p(u)]$ (see Figure \ref{fig:q_t_construction}).

\begin{figure}[h]
\centering
\begin{tikzpicture}[scale=2.25,>=Latex]
  \draw[thin] (1,0) circle ({sqrt(2)});

  % Same finite truncation.
  \draw[thick]
    (-0.414214,0.000000) --
    (-0.414213,-0.000421) --
    (-0.414211,-0.002451) --
    (-0.414141,-0.014285) --
    (-0.411765,-0.083189) --
    (-0.333333,-0.471405) --
    (1.000000,-1.414214) --
    (2.333333,-0.471405) --
    (2.411765,-0.083189) --
    (2.414141,-0.014285) --
    (2.414211,-0.002451) --
    (2.414213,-0.000421) --
    (2.414214,0.000000) --
    (2.414213,0.000421) --
    (2.414211,0.002451) --
    (2.414141,0.014285) --
    (2.411765,0.083189) --
    (2.333333,0.471405) --
    (1.000000,1.414214) --
    (-0.333333,0.471405) --
    (-0.411765,0.083189) --
    (-0.414141,0.014285) --
    (-0.414211,0.002451) --
    (-0.414213,0.000421) -- cycle;

  \fill (-0.414214,0.000000) circle (0.018);
  \fill (-0.414213,-0.000421) circle (0.018);
  \fill (-0.414211,-0.002451) circle (0.018);
  \fill (-0.414141,-0.014285) circle (0.018);
  \fill (-0.411765,-0.083189) circle (0.018);
  \fill (-0.333333,-0.471405) circle (0.018);
  \fill (1.000000,-1.414214) circle (0.018);
  \fill (2.333333,-0.471405) circle (0.018);
  \fill (2.411765,-0.083189) circle (0.018);
  \fill (2.414141,-0.014285) circle (0.018);
  \fill (2.414211,-0.002451) circle (0.018);
  \fill (2.414213,-0.000421) circle (0.018);
  \fill (2.414214,0.000000) circle (0.018);
  \fill (2.414213,0.000421) circle (0.018);
  \fill (2.414211,0.002451) circle (0.018);
  \fill (2.414141,0.014285) circle (0.018);
  \fill (2.411765,0.083189) circle (0.018);
  \fill (2.333333,0.471405) circle (0.018);
  \fill (1.000000,1.414214) circle (0.018);
  \fill (-0.333333,0.471405) circle (0.018);
  \fill (-0.411765,0.083189) circle (0.018);
  \fill (-0.414141,0.014285) circle (0.018);
  \fill (-0.414211,0.002451) circle (0.018);
  \fill (-0.414213,0.000421) circle (0.018);

  % Support line <q(1),x>=1.
  \draw[dashed,very thick] (2.636515,2.571405) -- (-3.303182,-1.628595);
  \node[right] at (-3.000000,-0.471405) {$\langle q(t),x\rangle=1$};

  % Highlight p(1), p(lambda), and q(1).
  \fill (1.000000,1.414214) circle (0.035) node[above right] {$p(t)$};
  \fill (-0.333333,0.471405) circle (0.035) node[above left] {$p(u)$};
  \draw[->,thick] (0,0) -- (-1.000000,1.414214);
  \fill (-1.000000,1.414214) circle (0.032) node[below left] {$q(t)=p(t)-2e_1$};

  % Origin and center.
  \fill (1,0) circle (0.016) node[below right] {$e$};
  \draw[thick] (0,-0.04)--(0,0.04);
  \draw[thick] (-0.04,0)--(0.04,0);
  \node[below left] at (0,0) {$0$};
\end{tikzpicture}

\caption{The line with normal \(q(t)=p(t)-2e_1\), here with \(t=1\), passes through the consecutive vertices
\(p(t)\) and \(p(u)\) ($m=2$).}
\label{fig:q_t_construction}

\end{figure}

Hence, we require \[
\langle q(t), p(t) \rangle = \langle q(t), p(u) \rangle = 1.\]
Denote
\[
A(t) = \frac{1-t^2}{1+t^2}, \quad B(t) = \frac{2t}{1+t^2}.
\]
Then, since $R^2 -1 = \frac{m^2}{4}$,

\begin{align*}
\langle q(t), p(u) \rangle  
&=
\left(-\frac m2+RA(t)\right)
\left(\frac m2+RA(u)\right)
+R^2B(t)B(u)\\
&=
R^2\bigl(A(t)A(u)+B(t)B(u)\bigr)
+\frac{mR}{2}\bigl(A(t)-A(u)\bigr)
-\frac{m^2}{4} \\
&=
R^2\bigl(A(t)A(u)+B(t)B(u) - 1\bigr)
+\frac{mR}{2}\bigl(A(t)-A(u)\bigr)
+ 1.
\end{align*}

An elementary computation shows
\begin{equation}\label{eq:AAB-identity}
        A(t)A(u)+B(t)B(u)-1
        =
        -\frac{2(t-u)^2}{(1+t^2)(1+u^2)},
\end{equation}
and
\begin{equation}\label{eq:A-difference}
        A(t)-A(u)
        =
        \frac{2(u^2-t^2)}{(1+t^2)(1+u^2)}.
\end{equation}
Thus, 
\begin{align*}
\langle q(t), p(u) \rangle -1
&=
-\frac{2R^2(t-u)^2}{(1+t^2)(1+u^2)}
+
\frac{mR(u^2-t^2)}{(1+t^2)(1+u^2)}\\
&=
\frac{R(t-u)\bigl(-2R(t-u)-m(t+u)\bigr)}{(1+t^2)(1+u^2)}.
\end{align*}
From the above we conclude that the supporting line defined by $\langle q(t), x \rangle = 1$ intersects $\Gamma_s$ at exactly two points: $p(t)$ and $p(u)$ where
\[
u= \frac{2R+m}{2R-m}t = \left(\frac{m + \sqrt{m^2 +4}}{2}\right)^2 t = \lambda t,
\]
where $\lambda =  \left(\frac{m + \sqrt{m^2 +4}}{2}\right)^2$.

We conclude that for every vertex $p(t)$ we must also have a vertex $p(\lambda t)$, so the construction of a polygon is not possible. Hence, we construct an infinite polygon, with vertex set closed under the map $t \mapsto \lambda t$.

To this end, fix some $\alpha > 0$ and consider the set 
\begin{equation}\label{eq:Lambda-alpha}
        \Lambda_\alpha
        :=
        \{0,\infty\}
        \cup
        \{\alpha\lambda^n:n\in\mathbb Z\}
        \cup
        \{-\alpha\lambda^n:n\in\mathbb Z\}.
\end{equation}
The set of positive parameters describes the upper chain of vertices, while the set of negative parameters describes the lower chain.  Define
\begin{equation}\label{eq:K-alpha}
        K_\alpha:=\conv\{p(t):t\in\Lambda_\alpha\}.
\end{equation}
This is an infinite polygon inscribed in $\Gamma_s$.

\begin{figure}[h!]
\centering
\begin{tikzpicture}[scale=2.25,>=Latex]
  % Invariant circle Gamma: center (1,0), radius sqrt(2).
  \draw[thin] (1,0) circle ({sqrt(2)});

  % Finite truncation of K_1.
  \draw[thick]
    (-0.414214,0.000000) --
    (-0.414213,-0.000421) --
    (-0.414211,-0.002451) --
    (-0.414141,-0.014285) --
    (-0.411765,-0.083189) --
    (-0.333333,-0.471405) --
    (1.000000,-1.414214) --
    (2.333333,-0.471405) --
    (2.411765,-0.083189) --
    (2.414141,-0.014285) --
    (2.414211,-0.002451) --
    (2.414213,-0.000421) --
    (2.414214,0.000000) --
    (2.414213,0.000421) --
    (2.414211,0.002451) --
    (2.414141,0.014285) --
    (2.411765,0.083189) --
    (2.333333,0.471405) --
    (1.000000,1.414214) --
    (-0.333333,0.471405) --
    (-0.411765,0.083189) --
    (-0.414141,0.014285) --
    (-0.414211,0.002451) --
    (-0.414213,0.000421) -- cycle;

  \fill (-0.414214,0.000000) circle (0.018);
  \fill (-0.414213,-0.000421) circle (0.018);
  \fill (-0.414211,-0.002451) circle (0.018);
  \fill (-0.414141,-0.014285) circle (0.018);
  \fill (-0.411765,-0.083189) circle (0.018);
  \fill (-0.333333,-0.471405) circle (0.018);
  \fill (1.000000,-1.414214) circle (0.018);
  \fill (2.333333,-0.471405) circle (0.018);
  \fill (2.411765,-0.083189) circle (0.018);
  \fill (2.414141,-0.014285) circle (0.018);
  \fill (2.414211,-0.002451) circle (0.018);
  \fill (2.414213,-0.000421) circle (0.018);
  \fill (2.414214,0.000000) circle (0.018);
  \fill (2.414213,0.000421) circle (0.018);
  \fill (2.414211,0.002451) circle (0.018);
  \fill (2.414141,0.014285) circle (0.018);
  \fill (2.411765,0.083189) circle (0.018);
  \fill (2.333333,0.471405) circle (0.018);
  \fill (1.000000,1.414214) circle (0.018);
  \fill (-0.333333,0.471405) circle (0.018);
  \fill (-0.411765,0.083189) circle (0.018);
  \fill (-0.414141,0.014285) circle (0.018);
  \fill (-0.414211,0.002451) circle (0.018);
  \fill (-0.414213,0.000421) circle (0.018);

  % Center and origin.
  \fill (1,0) circle (0.016) node[below right] {$e$};
  \draw[thick] (0,-0.04)--(0,0.04);
  \draw[thick] (-0.04,0)--(0.04,0);
  \node[below left] at (0,0) {$0$};

  % Endpoint labels.
  \node[right] at (2.414214,0) {$p(0)$};
  \node[left] at (-0.414214,0) {$p(\infty)$};

  \node[above right] at (1.000000,1.414214) {$p(1)$};
  \node[above left] at (-0.333333,0.471405) {$p(\lambda)$};
\end{tikzpicture}
\caption{A finite truncation of the infinite polygon
\(K_1=\operatorname{conv}\{p(t):t\in\Lambda_1\}\) on the circle
\(\Gamma=\{p:\langle p-2e_1,p\rangle=1\}\).}
\label{fig:K_alpha_construction}
\end{figure}

The point $p(0)$ is the right endpoint of the circle and $p(\infty)$ is the left endpoint.  Explicitly,
\[
        p(0)=\left(\frac m2+R,0\right),
        \qquad
        p(\infty)=\left(\frac m2-R,0\right).
\]
Since $R>|m|/2$, these two points lie on opposite sides of the origin.  The set also contains points above and below the $x$-axis. Hence $0\in \intt K_\alpha$ and the polar $K_\alpha^\circ$ is a compact convex body.

We now prove that each shifted vertex $q(t)=p(t)-s$
is indeed the outward polar normal of the corresponding edge of $K_\alpha$.

\begin{lemma}\label{lem:supporting-halfplanes}
For every $t\in\Lambda_\alpha$,
\begin{equation}\label{eq:support-inequality}
        \langle q(t), p(u) \rangle \le 1
        \qquad\forall u\in\Lambda_\alpha.
\end{equation}
If $t\notin\{0,\infty\}$, equality holds at $u=t$ and $u=\lambda t$.  Hence the line
\[
        \langle q(t), x \rangle =1
\]
supports $K_\alpha$ along the edge joining $p(t)$ and $p(\lambda t)$.
\end{lemma}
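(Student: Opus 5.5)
The plan is to read everything off the factorization of $\langle q(t),p(u)\rangle-1$ computed just before the statement, and to treat the two special parameters $0,\infty$ separately.

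\textbf{Step 1: factor the generic formula.} Since $u=\lambda t$ is exactly the second root of $-2R(t-u)-m(t+u)=(2R-m)u-(2R+m)t$, that factor equals $(2R-m)(u-\lambda t)$. Hence, for finite $t,u$,
\[
\langle q(t),p(u)\rangle-1=-\frac{R(2R-m)\,(u-t)(u-\lambda t)}{(1+t^2)(1+u^2)} .
\]
Because $R=\sqrt{1+m^2/4}>m/2$, we have $R(2R-m)>0$. So the inequality \eqref{eq:support-inequality} holds if and only if $(u-t)(u-\lambda t)\ge 0$. Equivalently, $u$ must not lie strictly between $t$ and $\lambda t$. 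Equality holds exactly when $u=t$ or $u=\lambda t$.

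\textbf{Step 2: generic $t$.} Take $t=\pm\alpha\lambda^k$. The open interval between $t$ and $\lambda t$ is $(\alpha\lambda^k,\alpha\lambda^{k+1})$ or $(-\alpha\lambda^{k+1},-\alpha\lambda^k)$. Since $\lambda>1$, neither interval contains any point of $\Lambda_\alpha$.
\begin{itemize}
\item For $u=0$, the product is $(u-t)(u-\lambda t)=\lambda t^2>0$.
\item For $u=\infty$, let $u\to\infty$ in the formula above. The right-hand side tends to $-R(2R-m)/(1+t^2)<0$. One can also check this directly with $p(\infty)=(m/2-R,0)$.
\end{itemize}
Together with Step 1, this gives \eqref{eq:support-inequality} for all $u\in\Lambda_\alpha$, with equality at $u=t$ and $u=\lambda t$.

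\textbf{Step 3: the special parameters $t\in\{0,\infty\}$.} Here I compute directly.
\begin{itemize}
\item $q(0)=(R-m/2,0)$, so $\langle q(0),p(u)\rangle=(R-m/2)(m/2+RA(u))$. Since $A\le 1$ and $R-m/2>0$, this is at most $(R-m/2)(R+m/2)=R^2-m^2/4=1$.
\item $q(\infty)=(-m/2-R,0)$, so $\langle q(\infty),p(u)\rangle=-(m/2+R)(m/2+RA(u))$. Since $A\ge -1$, this is at most $-(m/2+R)(m/2-R)=1$.
\end{itemize}

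\textbf{Step 4: conclusion.} The half-plane $\{x:\langle q(t),x\rangle\le 1\}$ is closed and convex and contains every $p(u)$, $u\in\Lambda_\alpha$. Hence it contains $K_\alpha$, including its closure; note the accumulation points $p(0),p(\infty)$ are already generators. For $t\notin\{0,\infty\}$ the boundary line passes through the two points $p(t),p(\lambda t)\in K_\alpha$. Therefore the line supports $K_\alpha$ along the segment $[p(t),p(\lambda t)]$, which is then an edge.

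The only delicate point is the sign bookkeeping in the factorization, together with the separate handling of the endpoints $0,\infty$, where the rational parametrization degenerates.
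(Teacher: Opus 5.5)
Your proof is correct and follows essentially the paper's route: you factor $\langle q(t),p(u)\rangle-1$ as $-R(2R-m)(u-t)(u-\lambda t)/\bigl((1+t^2)(1+u^2)\bigr)$ and use that no parameter of $\Lambda_\alpha$ lies strictly between $t$ and $\lambda t$. The differences are minor and make the argument more explicit: you check $t\in\{0,\infty\}$ directly using $A(u)\in[-1,1]$, where the paper only says ``by taking limits,'' and you handle $u=\infty$ separately instead of implicitly substituting it into the rational formula.
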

\begin{proof}
Notice that according to previous computation
\begin{align} \label{eq:q_t_p_u}
\langle q(t), p(u) \rangle -1 = \frac{R(2R-m)(t-u)(u - \lambda t)}{(1+t^2)(1+u^2)}.
\end{align}

First assume $t\notin\{0,\infty\}$.  The two parameters $t$ and $\lambda t$ are consecutive parameters on the corresponding positive or negative orbit.

If $t>0$, then $t<\lambda t$, and there is no parameter $u\in\Lambda_\alpha$ strictly between $t$ and $\lambda t$.  Therefore
\[
        (t-u)(u-\lambda t)\le 0
        \qquad\forall u\in\Lambda_\alpha.
\]
If $t<0$, then $\lambda t<t$, and again no parameter lies strictly between $\lambda t$ and $t$.  The same inequality
\[
        (t-u)(u-\lambda t)\le 0
\]
holds for every selected $u$.

The constant $R(2R - m)$ in (\ref{eq:q_t_p_u}) is positive so
\[
        \langle q(t), p(u)\rangle - 1\le 0.
\]
This proves \eqref{eq:support-inequality}.  Equality holds precisely at $u=t$ and $u=\lambda t$ among the selected parameters.
For $t=0$ and $t=\infty$, the conclusion follows by taking limits.
\end{proof}

We next describe $K_\alpha$ as the intersection of the supporting half-planes found above.

\begin{proposition}\label{prop:halfplane-representation}
For $t \in \Lambda_{\alpha}$ denote the half-space
\[
        H_t=\{x:\ip{q(t)}{x}\le 1\}.
\]
Then, one has
\begin{equation}\label{eq:halfplane-representation}
        K_\alpha
        =
        \bigcap_{t\in\Lambda_\alpha} H_t
\end{equation}
\end{proposition}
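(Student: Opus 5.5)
One inclusion is immediate. By Lemma~\ref{lem:supporting-halfplanes}, every generating point $p(u)$, $u\in\Lambda_\alpha$, lies in each $H_t$. Each $H_t$ is a closed convex half-plane, so the convex hull (and its closure) is contained in $\bigcap_t H_t$. Hence $K_\alpha\subseteq\bigcap_{t\in\Lambda_\alpha}H_t$.

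For the reverse inclusion I will describe $\partial K_\alpha$ explicitly and then show that any point outside $K_\alpha$ violates some $H_t$.

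Work on the upper arc first. The positive parameters $\{\alpha\lambda^n\}$, ordered increasingly, give points $p(\alpha\lambda^n)$ that move monotonically along the upper semicircle from $p(0)$ (as $n\to-\infty$) to $p(\infty)$ (as $n\to+\infty$), since $t\mapsto p(t)$ is the standard rational parametrization. Lemma~\ref{lem:supporting-halfplanes} says the segment $[p(\alpha\lambda^n),p(\alpha\lambda^{n+1})]$ lies on the line $\ip{q(\alpha\lambda^n)}{x}=1$, which supports $K_\alpha$. So these segments are edges of $K_\alpha$. Their union is a polygonal arc $P^+$ that connects $p(0)$ to $p(\infty)$, because the vertices accumulate only at the two endpoints. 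The negative parameters give the lower arc $P^-$ in the same way. Then $P^+\cup P^-$ is a closed Jordan curve made of edges of $K_\alpha$, so it bounds a compact convex region $D$. Since $D$ contains all generators and $D\subseteq K_\alpha$, we get $D=K_\alpha$ (here $K_\alpha$ is understood as the closed hull; the endpoints $p(0)$ and $p(\infty)$ are themselves generators, so the hull is already closed).

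Now take $y\notin K_\alpha$. Consider the ray from $0\in\intt K_\alpha$ through $y$; it leaves $K_\alpha$ at a boundary point $z$.
\begin{itemize}
\item If $z$ lies on an edge $[p(t),p(\lambda t)]$, then $\ip{q(t)}{z}=1$, and since $y=cz$ with $c>1$, we get $\ip{q(t)}{y}=c>1$. Thus $y\notin H_t$.
\item Otherwise $z\in\{p(0),p(\infty)\}$, i.e.\ $y$ lies on the $x$-axis beyond $p(0)$ or beyond $p(\infty)$. Here I use $H_0$ and $H_\infty$ directly. We have $q(0)=p(0)-s=(R-\tfrac m2,0)$ and $\ip{q(0)}{p(0)}=R^2-\tfrac{m^2}{4}=1$. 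So $H_0=\{x_1\le 1/(R-\tfrac m2)=R+\tfrac m2\}$, which excludes every point of the axis beyond $p(0)$. Symmetrically, $H_\infty$ excludes every point beyond $p(\infty)$.
\end{itemize}
This gives $\bigcap_t H_t\subseteq K_\alpha$.

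The main obstacle is the boundary description: showing that the infinitely many edges really exhaust $\partial K_\alpha$, so that no part of the boundary escapes the countable family of half-planes. This relies on the monotonicity of $p$ along each semicircle and on the vertices accumulating only at $p(0)$ and $p(\infty)$. The limiting half-planes $H_0$ and $H_\infty$ are exactly what handles the two accumulation points.
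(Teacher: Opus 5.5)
Your argument is correct, but the reverse inclusion takes a different route from the paper.

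The paper separates $x\notin K_\alpha$ by a functional $v$ normalized to have maximum $1$ on $K_\alpha$. It then splits into cases according to whether the exposed face is an edge, an ordinary vertex, or an accumulation point. In each case it uses the normal cone to identify $v$ as $q(t)$ or as a convex combination of $q(t_0/\lambda)$ and $q(t_0)$. At $p(0)$ and $p(\infty)$ this needs the extra fact that the normal cone is a single ray.

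You instead push $y$ radially to $z\in\partial K_\alpha$, and you only need \emph{some} $t$ with $\ip{q(t)}{z}=1$; homogeneity then gives $\ip{q(t)}{y}=c>1$. This avoids separation and all the normal-cone bookkeeping. It also makes the accumulation points trivial: $\ip{q(0)}{p(0)}=1$ already puts $p(0)$ on the boundary line of $H_0$, so your explicit axis computation is correct but not needed. The cost is the global identification of $\partial K_\alpha$ with $J=P^+\cup P^-$. The paper uses this tacitly too, when it asserts that every edge is some $[p(t),p(\lambda t)]$ and names the two edges adjacent to a vertex.

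The one step you leave unjustified is that $J$ bounds a \emph{convex} region $D$, which is what gives $K_\alpha\subseteq D$. You can bypass $D$ entirely. Every point of $J$ lies on a supporting line of $K_\alpha$, so $J\subseteq\partial K_\alpha$. Both are Jordan curves, and a Jordan curve cannot be a proper subset of another: deleting a point of $\partial K_\alpha\setminus J$ would embed a circle into an open arc. Hence $J=\partial K_\alpha$, which is exactly what your radial argument requires.
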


\begin{proof}
The inclusion
\[
        K_\alpha
        \subseteq
        \bigcap_{t\in\Lambda_\alpha} H_t\]
follows immediately from Lemma \ref{lem:supporting-halfplanes}, which implies that 
\[
p\left(\Lambda_{\alpha}\right) \subseteq         \bigcap_{t\in\Lambda_\alpha}
        H_t.
\]
Since the right-hand side of the above is convex, we get that
\[
K_{\alpha} = \conv{p\left(\Lambda_{\alpha}\right)} \subseteq \bigcap_{t\in\Lambda_\alpha}H_t.
\]

For the inverse inclusion, let 
\[
        x\in \R^2\setminus K_\alpha.
\]
Since $K_\alpha$ is compact, convex, and contains $0$ in its interior, the strong separation theorem gives a vector $v\ne0$, normalized so that
\begin{equation}\label{eq:normalized-separation}
        \ip{v}{x}>1,
        \qquad
        \ip{v}{y}\le 1\quad\text{for every }y\in K_\alpha.
\end{equation}
Let
\[
        F=K_\alpha\cap\{y:\ip{v}{y}=1\}.
\]
\smallskip
\noindent\emph{Case 1: $F$ is an edge.}
Then
\[
        F=[p(t),p(\lambda t)]
\]
for some $t\in\Lambda_\alpha\setminus\{0,\infty\}$. At a relative interior point of this edge, the tangent cone is a half-plane whose outward normal is $q(t)$. Since the normal cone and the support cone are dual to each other (see \cite{schneider} Section 2.2), we conclude that 
\[
        v = c\cdot q(t).
\]
Both $v$ and $q(t)$ are normalized to equal $1$ on the edge, so $c=1$. From \eqref{eq:normalized-separation},
we have $\ip{q(t)}{x}>1$, so $x\notin H_t$.

\smallskip
\noindent\emph{Case 2: $F$ is a non-accumulation vertex.}
Suppose
\[
        F=\{p(t_0)\},
        \qquad
        t_0\in\Lambda_\alpha\setminus\{0,\infty\}.
\]
The two adjacent edges of $F$ are
\[
        [p(t_0/\lambda),p(t_0)]
        \quad\text{and}\quad
        [p(t_0),p(\lambda t_0)],
\]
with outward normals $q(t_0/\lambda)$ and $q(t_0)$ respectively.
Thus, the normal cone at $p(t_0)$
\[
        N_{K_\alpha}(p(t_0))
        =
      \{\beta_1 q(t_0/\lambda) + \beta_2 q(t_0) \, \vert \, \beta_1, \beta_2 \geq 0\}.
\]
Since $v$ exposes $p(t_0)$,
\[
        v=a q(t_0/\lambda)+b q(t_0)
\]
for some $a,b\ge0$. By definition of $q(t), q(t_0/\lambda)$ we get
\[
        1=\ip{v}{p(t_0)}=a+b.
\]
Hence
\[
        v=(1-b) q(t_0/\lambda)+b q(t_0),
        \qquad
        0 \leq b \leq 1,
\]
Using \eqref{eq:normalized-separation},
\[
        1<\ip{v}{x}
        =(1-b)\ip{q(t_0/\lambda)}{x}+b\ip{q(t_0)}{x}.
\]
so at least one of
\[
        \ip{q(t_0/\lambda)}{x},
        \qquad
        \ip{q(t_0)}{x}
\]
is strictly larger than $1$. Thus $x\notin H_{t_0/\lambda}$ or $x\notin H_{t_0}$.

\smallskip
\noindent\emph{Case 3: $F$ is an accumulation vertex.}
If $F=\{p(0)\}$, then the tangent cone at $p(0)$ is the half-plane
\[
        T_{K_\alpha}(p(0))=\{w:\ip{q(0)}{w}\le0\}.
\]
Indeed, $K_\alpha$ lies in the supporting halfspace $\ip{q(0)}{y}\le1$. Since there exists a sequence of points $\{p(t_n)\}$ in the upper half plane, and respectively $\{p(-t_n)\}$, both on $\Gamma_s$ that converge to $p(0)$, no other halfspace supports $p(0)$. Hence, the normal cone at $p(0)$ is
\[
        N_{K_\alpha}(p(0))=\mathbb R_+q(0).
\]
Normalization gives $v=q(0)$, and therefore \eqref{eq:normalized-separation} implies
\[
        \ip{q(0)}{x}>1.
\]
So $x\notin H_0$.

The case $F=\{p(\infty)\}$ is identical. 
In every case we get,
\[
        x\notin K_\alpha\quad\Longrightarrow\quad x\notin \bigcap_{t\in \Lambda_\alpha} H_t.
\]
so we conclude equality:
\begin{equation}\label{eq:halfspace-representation}
        K_\alpha
        =
        \bigcap_{t\in\Lambda_\alpha}\{x:\ip{q(t)}{x}\le1\}.
\end{equation}

\end{proof}

\begin{proposition}
\[
    K_{\alpha}^\circ = K_{\alpha} - s.
    \]
\end{proposition}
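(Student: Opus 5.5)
The plan is to combine Proposition~\ref{prop:halfplane-representation} with the standard fact that the polar of an intersection of half-planes $\{x:\ip{y}{x}\le 1\}$, $y\in Y$, is the closed convex hull of $Y\cup\{0\}$. Write
\[
Q:=\{q(t):t\in\Lambda_\alpha\}=p(\Lambda_\alpha)-s .
\]
By Proposition~\ref{prop:halfplane-representation},
\[
K_\alpha=\bigcap_{t\in\Lambda_\alpha}H_t=\pol{Q}.
\]
By the bipolar theorem this gives
\[
\pol{K_\alpha}=(\pol{Q})^\circ=\overline{\conv}\,(Q\cup\{0\}).
\]

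It remains to identify this set with $K_\alpha-s$. Since translation commutes with convex hulls and closures,
\[
\overline{\conv}\,Q=\overline{\conv}\,\bigl(p(\Lambda_\alpha)\bigr)-s .
\]
Two points need checking.

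\emph{The convex hull is already closed.} The set $p(\Lambda_\alpha)$ is compact: it is a bounded subset of $\Gamma_s$ whose only accumulation points are $p(0)$ and $p(\infty)$, since $\alpha\lambda^n\to 0$ or $\infty$ as $n\to\mp\infty$, and both of these lie in the set. The convex hull of a compact set in $\R^2$ is compact (Carathéodory), so $\conv p(\Lambda_\alpha)=K_\alpha$ is closed and $\overline{\conv}\,Q=K_\alpha-s$.

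\emph{The origin can be dropped.} We must show $0\in K_\alpha-s$, that is, $s\in K_\alpha$. Since $\pol{K_\alpha}$ contains the origin, the union with $\{0\}$ is harmless once this holds. Take $s=(m,0)$ with $0<m<m/2+R$. It lies on the segment $[p(\infty),p(0)]$, because $m/2-R<0<m$. Both endpoints belong to $K_\alpha$, so $s\in K_\alpha$.

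Combining these, $\pol{K_\alpha}=\conv\bigl((K_\alpha-s)\cup\{0\}\bigr)=K_\alpha-s$.

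The main obstacle is the polarity fact $\pol{(\pol{Q})}=\overline{\conv}(Q\cup\{0\})$ for the infinite set $Q$. I would invoke it via the bipolar theorem for closed convex sets containing the origin, noting that
\[
\pol{Q}=\pol{\bigl(\overline{\conv}(Q\cup\{0\})\bigr)}.
\]
Everything else is the compactness bookkeeping above, which holds because the vertex parameters accumulate only at $0$ and $\infty$.
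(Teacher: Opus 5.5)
Your proof is correct and follows essentially the same route as the paper: the half-plane representation gives $K_\alpha=\pol{Q}$, the bipolar theorem gives $\pol{K_\alpha}=\overline{\conv}(Q\cup\{0\})$, and compactness of $Q$ together with $0\in\conv Q$ reduces this to $K_\alpha-s$. You also justify two facts the paper only asserts, namely that $p(\Lambda_\alpha)$ is compact and that $s\in[p(\infty),p(0)]\subset K_\alpha$.
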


\begin{proof}
    Denote $Y = \{q(t) \vert t\in \Lambda_\alpha\}$. By Proposition \ref{prop:halfplane-representation} we have
    \[
    K_{\alpha} = \{ x : \langle y, x \rangle \leq 1, \forall y \in Y\} = Y^\circ.
    \]
    Thus, 
    \[
    K_{\alpha}^\circ = (Y^\circ)^\circ = \overline{\conv}(Y \cup \{0\}).
    \]
    In our case, $Y$ is compact, and $0 \in \conv(Y)$ so we get \[K_\alpha^\circ = \conv\{q(t) : t\ \in \Lambda_\alpha\} = \conv\{p(t) - s : t\ \in \Lambda_\alpha\} = K_\alpha - s
    \]
\end{proof}

\begin{figure}[h]
\centering
\begin{tikzpicture}[scale=2.1,>=Latex]
  \pgfmathsetmacro{\rtwo}{sqrt(2)}
  \pgfmathsetmacro{\lam}{3+2*sqrt(2)}
  \pgfmathsetmacro{\xleft}{1-sqrt(2)}
  \pgfmathsetmacro{\xright}{1+sqrt(2)}
  \pgfmathsetmacro{\ta}{1/(3+2*sqrt(2))^2}
  \pgfmathsetmacro{\tb}{1/(3+2*sqrt(2))}
  \pgfmathsetmacro{\tc}{1}
  \pgfmathsetmacro{\td}{(3+2*sqrt(2))}
  \pgfmathsetmacro{\te}{(3+2*sqrt(2))^2}

  \newcommand{\pt}[1]{({1 + \rtwo*(1-(#1)^2)/(1+(#1)^2)}, {\rtwo*(2*(#1))/(1+(#1)^2)})}
  \newcommand{\ptshift}[1]{({-1 + \rtwo*(1-(#1)^2)/(1+(#1)^2)}, {\rtwo*(2*(#1))/(1+(#1)^2)})}

  \draw[very thin, gray!18] (-0.65,-1.7) grid[step=.5] (2.55,1.7);
  \draw[->, gray!70] (-0.7,0)--(2.6,0) node[below right] {$x_1$};
  \draw[->, gray!70] (0,-1.75)--(0,1.75) node[left] {$x_2$};

  \draw[thick, black!45] (1,0) circle[radius={sqrt(2)}];
  \fill[black!45] (1,0) circle (0.018) node[below] {$e$};
  \node[black!55] at (2.08,1.30) {$\Gamma$};

  \draw[very thick, -{Latex[length=3mm]}, teal!70!black] (-1.0,-1.45)--(1,-1.45)
    node[midway, below=2pt] {$2e$};

  \filldraw[fill=blue!45, fill opacity=.28, draw=blue!75!black, line width=1pt]
    (\xright,0)
    -- \pt{\ta} -- \pt{\tb} -- \pt{\tc} -- \pt{\td} -- \pt{\te}
    -- (\xleft,0)
    -- \pt{-\te} -- \pt{-\td} -- \pt{-\tc} -- \pt{-\tb} -- \pt{-\ta}
    -- cycle;

  \filldraw[fill=orange!60, fill opacity=.26, draw=orange!85!black, line width=1pt]
    ({\xright-2},0)
    -- \ptshift{\ta} -- \ptshift{\tb} -- \ptshift{\tc} -- \ptshift{\td} -- \ptshift{\te}
    -- ({\xleft-2},0)
    -- \ptshift{-\te} -- \ptshift{-\td} -- \ptshift{-\tc} -- \ptshift{-\tb} -- \ptshift{-\ta}
    -- cycle;

  \fill[blue!80!black] (\xright,0) circle (0.018);
  \fill[blue!80!black] (\xleft,0) circle (0.018);
  \fill[orange!90!black] ({\xright-2},0) circle (0.018);
  \fill[orange!90!black] ({\xleft-2},0) circle (0.018);

  \fill[blue!90!black] \pt{\tc} circle (0.018);
  \fill[orange!90!black] \ptshift{\tc} circle (0.018);
  \draw[dashed, gray!70] \pt{\tc} -- \ptshift{\tc};

  \node[blue!85!black] at (1.01,0.83) {$K_{\alpha}$};
  \node[orange!90!black] at (-1.06,0.83) {$K_{\alpha}^{\circ}=K_{\alpha}-2e$};
\end{tikzpicture}
\caption{ $K_{\alpha}$ (blue) and its translated copy $K_{\alpha}-2e=K_{\alpha}^{\circ}$ (orange). $m=2.$}
\label{fig:K_alpha_and_dual}
\end{figure}

\subsection{Proof of Theorem \ref{thm-main1} in the plane}
To complete our proof, let \(s=(m,0)\), \(m>0\), and set
     \[
     \lambda=\left(\frac{m+\sqrt{m^2+4}}2\right)^2.
     \]
Define $K = K_1$ and $T=K_{\sqrt{\lambda}}$. \\
Let us verify that $T \neq K$. Indeed, $p(1)$ is a vertex of $K$ and the two nearest vertices of $T$ are $p(\lambda^{-1/2})$ and $p(\lambda^{1/2})$. Notice that 
\[
\lambda^{-1/2} < 1 < \lambda^{1/2},
\]
so by equality (\ref{eq:q_t_p_u})
it must hold that $\langle q(\lambda^{-1/2}), p(1) \rangle > 1$ so $p(1) \notin K_{\sqrt{\lambda}}$, which implies $K\neq T.$ For a general  $0\neq s\in \R^2$, one may rotate the construction so that $s$ becomes $(|s|,0)$.
\subsection{The general construction}\label{sec:revolution}

The planar construction extends to every dimension by revolving the infinite polygon around the translation axis.  

Let \(L\subset\R^2\) be a compact convex body that is symmetric with respect to the
first coordinate axis, that is,
\[
        (x_1,x_2)\in L\quad\Longleftrightarrow\quad (x_1,-x_2)\in L.
\]
For \(n\ge2\), write \(\R^n=\R\oplus\R^{n-1}\), and define the body of revolution
of \(L\) by
\begin{equation}\label{eq:revolution-definition}
        \mathcal R_n(L)
        :=\left\{(x_1,x')\in\R\oplus\R^{n-1}:
        (x_1,\|x'\|)\in L\right\}.
\end{equation}

\begin{lemma}\label{prop:polar-revolution}
Let \(L\subset\R^2\) be a compact convex body, symmetric with respect to the first
coordinate axis, and suppose \(0\in\intt L\).  Then \(\mathcal R_n(L)\) is a
compact convex body with \(0\in\intt\mathcal R_n(L)\), and
\begin{equation}\label{eq:polar-revolution}
        \mathcal R_n(L)^\circ=\mathcal R_n(L^\circ),
\end{equation}
where the polar on the right is taken in \(\R^2\).
\end{lemma}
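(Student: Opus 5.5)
Since $L$ is symmetric in $x_2$ and convex, each vertical slice $\{x_2:(x_1,x_2)\in L\}$ is a symmetric interval $[-r(x_1),r(x_1)]$. The proof then splits into basic properties of $\mathcal R_n(L)$ and a direct comparison of the two sets in \eqref{eq:polar-revolution}.

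\emph{Basic properties.} We have
\[
(x_1,x')\in\mathcal R_n(L)\iff\|x'\|\le r(x_1).
\]
The function $r$ is concave on the projection interval of $L$. Therefore, if $\|x'\|\le r(x_1)$ and $\|y'\|\le r(y_1)$, then
\[
\|(1-\theta)x'+\theta y'\|\le(1-\theta)r(x_1)+\theta r(y_1)\le r\bigl((1-\theta)x_1+\theta y_1\bigr),
\]
which gives convexity of $\mathcal R_n(L)$. Compactness holds because $\mathcal R_n(L)$ is the preimage of the compact set $L$ under the continuous map $(x_1,x')\mapsto(x_1,\|x'\|)$, intersected with a bounded set. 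For the interior, take a disc $B_\varepsilon\subset L$; then the ball $B_\varepsilon\subset\R^n$ lies in $\mathcal R_n(L)$, since its image under the map is contained in $B_\varepsilon\subset\R^2$.

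\emph{Inclusion $\mathcal R_n(L^\circ)\subseteq\mathcal R_n(L)^\circ$.} Let $(y_1,y')$ satisfy $(y_1,\|y'\|)\in L^\circ$, and let $(x_1,x')\in\mathcal R_n(L)$. By Cauchy--Schwarz,
\[
\ip{(x_1,x')}{(y_1,y')}\le x_1y_1+\|x'\|\|y'\|=\ip{(x_1,\|x'\|)}{(y_1,\|y'\|)}\le1,
\]
because $(x_1,\|x'\|)\in L$ and $(y_1,\|y'\|)\in L^\circ$.

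\emph{Inclusion $\mathcal R_n(L)^\circ\subseteq\mathcal R_n(L^\circ)$.} This is the step that needs care. Let $(y_1,y')\in\mathcal R_n(L)^\circ$; we must show $(y_1,\|y'\|)\in L^\circ$, i.e.\ $x_1y_1+x_2\|y'\|\le1$ for all $(x_1,x_2)\in L$.
\begin{itemize}
\item If $y'\neq0$, set $e=y'/\|y'\|$. For $(x_1,x_2)\in L$, the point $(x_1,|x_2|e)$ lies in $\mathcal R_n(L)$, so $x_1y_1+|x_2|\|y'\|\le1$, which implies the required bound.
\item If $y'=0$, use any unit vector $e$ instead; the same argument applies.
\end{itemize}
Here the symmetry of $L$ is essential: it ensures $(x_1,|x_2|)\in L$ whenever $(x_1,x_2)\in L$, so the points used lie in $\mathcal R_n(L)$. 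The same symmetry makes $L^\circ$ symmetric about the first axis, so $\mathcal R_n(L^\circ)$ is well defined in the sense of \eqref{eq:revolution-definition}.
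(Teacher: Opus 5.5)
Your proof is correct and complete, but it follows a different route from the paper. The paper argues structurally in three steps:
\begin{enumerate}
\item Polarity commutes with orthogonal maps, so $\mathcal R_n(L)^\circ$ is invariant under rotations fixing $e_1$ and is therefore a body of revolution.
\item For a body of revolution, the projection onto $E_u=\operatorname{span}\{e_1,u\}$ equals the section. Section--projection duality then gives $\mathcal R_n(L)^\circ\cap E_u=(P_{E_u}\mathcal R_n(L))^\circ=L^\circ$.
\item A body of revolution is determined by this single section.
\end{enumerate}

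You instead prove the two inclusions directly, and your two steps are the concrete content of that duality. In the first inclusion, the Cauchy--Schwarz bound $\ip{x'}{y'}\le\|x'\|\|y'\|$ replaces the projection step. Pairing with a vector in the plane spanned by $e_1$ and $y'$ only sees the projection onto that plane, and the projection stays in $L$ because the slices of $L$ are symmetric intervals. In the second inclusion, your test points $(x_1,|x_2|e)$ are precisely points of the section of $\mathcal R_n(L)$ by that plane. Because you choose the plane adapted to each $y$ via $e=y'/\|y'\|$, you never need the separate rotation-invariance step.

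Your route is elementary and self-contained. It also establishes convexity, compactness and $0\in\intt\mathcal R_n(L)$, which the lemma asserts but the paper's proof does not address. In addition, it makes explicit the symmetric-slice observation behind the paper's ``clearly'' $P_{E_u}\mathcal R_n(L)=\mathcal R_n(L)\cap E_u$. The paper's route is shorter and more conceptual, resting on standard facts about polarity.

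A cosmetic remark: the compactness step needs no auxiliary bounded set. Since $\|(x_1,\|x'\|)\|=\|(x_1,x')\|$, the preimage of $L$ is already bounded.
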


\begin{proof}
First, notice that since polarity commutes with orthogonal maps, $\mathcal R_n(L)^\circ$ must be invariant under all rotations fixing $e_1$. This implies that $\mathcal R_n(L)^\circ$ is also a body of revolution around $e_1$.
Fix a unit vector
\(u \in e_1^\perp\) and define
\[
        E_u:=\operatorname{span}\{e_1,u\}.
\]
Clearly $P_{E_u} \mathcal R_n(L) = \mathcal R_n(L)\cap E_u = L$ (in the corresponding subspace). Using the section-projection correspondence of duality we get
\[
\mathcal R_n(L)^\circ \cap E_u = \left(P_{E_u} \mathcal R_n(L)\right)^\circ = L^\circ.
\]
Since $\left(\mathcal R_n(L)\right)^\circ$ is a body of revolution we conclude \(\mathcal R_n(L)^\circ=\mathcal R_n(L^\circ)\), as claimed.
\end{proof}

Since \(K_\alpha^\circ=K_\alpha-s\) in the plane, and since translation along
     the
 \(e_1\)-axis commutes with revolution, we get
 \[
 \mathcal R_n(K_\alpha)^\circ
 =
 \mathcal R_n(K_\alpha^\circ)
 =
 \mathcal R_n(K_\alpha-s)
 =
 \mathcal R_n(K_\alpha)-s.
 \]
 Thus \(\mathcal R_n(K_1)\) and \(\mathcal R_n(K_{\sqrt\lambda})\) satisfy the
 required identities. They are distinct because their intersections with the
plane \(\operatorname{span}\{e_1,e_2\}\) are the distinct planar
     bodies
 \(K_1\) and \(K_{\sqrt\lambda}\).

\section{Proof of Theorem \ref{thm-main2}}
To construct our sets, we use a ball bounded by $\Gamma_s$ in the proof of Theorem \ref{thm-main1}. That is,
\[
K = \{x \in \R^n : \ip{x}{x-a} \leq 1\}, \qquad 0\neq a \in \R^n.
\]
We claim that $K - a = (K^\circ + a)^\circ$. Indeed, having $x\in (K^\circ + a)^\circ$ is equivalent to $\ip{x}{y+a} \leq 1$ for all $y\in K^\circ$, which can be reformulated as $h_{K^\circ}(x) + \ip{x}{a} \leq 1$. Since $h_{K^\circ}(\cdot) = p_K(\cdot)$, where $p_K$ is Minkowski gauge functional of $K$, we get
\[
x\in (K^\circ + a)^\circ \iff p_K(x) + \ip{x}{a} \leq 1.
\]
By definition: 
\[
p_K(x) = \inf\{\lambda > 0 : x \in \lambda K\} = \inf\left\{\lambda > 0 : \ip{\frac{x}{\lambda}}{\frac{x}{\lambda} - a} \leq 1\right\}.
\]
Clearly, the infimum is achieved at 
\[
\lambda = \frac{-\ip{a}{x}+\sqrt{\ip{a}{x}^2 + 4|x|^2}}{2}.
\]
Hence
\[
p_K(x) + \ip{x}{a} \leq 1 \iff \frac{\ip{a}{x}+\sqrt{\ip{a}{x}^2 + 4|x|^2}}{2} \leq 1,
\]
which is equivalent to the condition
\[
|x|^2 + \ip{a}{x} \leq 1,
\]
or $\ip{x}{x+a} \leq 1$, which means $x \in K-a$.

Now, set $T = K^\circ + a$. Clearly, $T + T^\circ = K^\circ + a + K - a = K^\circ + K$.
Note that $T\neq K$ since $K^\circ$ is a proper ellipsoid, and clearly $T \neq K^\circ$.

\bibliographystyle{plain} % Or whichever style you prefer
\bibliography{references} % The name of your .bib file (without the .bib extension)

@book{schneider,
title={Convex bodies: the Brunn--Minkowski theory},
  author={Schneider, Rolf},
  volume={151},
  year={2013},
  publisher={Cambridge university press}
}

@article{AM-duality,
 title={A characterization of the concept of duality},
  author={Artstein-Avidan, Shiri and Milman, Vitali},
  journal={Electronic Research Announcements in Mathematical Sciences},
  volume={14},
  pages={42--59},
  year={2007}
}

@article{jensen,
  title={Self-polar polytopes},
  author={Jensen, Alathea},
  journal={Polytopes and Discrete Geometry},
  year={2019}
}

@article{AM-legendre,
  title={The concept of duality in convex analysis, and the characterization of the Legendre transform},
  author={Artstein-Avidan, Shiri and Milman, Vitali},
  journal={Annals of mathematics},
  pages={661--674},
  year={2009},
  publisher={JSTOR}
}

@article{BS-duality,
 title={A characterization of the duality mapping for convex bodies},
  author={B{\"o}r{\"o}czky, K{\'a}roly J and Schneider, Rolf},
  journal={Geometric and Functional Analysis},
  volume={18},
  number={3},
  pages={657--667},
  year={2008},
  publisher={Springer}
}

@article{gruber,
  title={The endomorphisms of the lattice of convex bodies},
  author={Gruber, Peter M},
  booktitle={Abhandlungen aus dem Mathematischen Seminar der Universit{\"a}t Hamburg},
  volume={61},
  number={1},
  pages={121--130},
  year={1991},
  organization={Springer}
}

@article{slomka,
  title={On duality and endomorphisms of lattices of closed convex sets.},
  author={Slomka, Boaz A},
  journal={Advances in Geometry},
  volume={11},
  number={2},
  year={2011}
}

\end{document}